\begin{document}
\numberwithin{equation}{section}

\def\1#1{\overline{#1}}
\def\2#1{\widetilde{#1}}
\def\3#1{\widehat{#1}}
\def\4#1{\mathbb{#1}}
\def\5#1{\frak{#1}}
\def\6#1{{\mathcal{#1}}}

\newcommand{\de}{\partial}
\newcommand{\R}{\mathbb R}
\newcommand{\al}{\alpha}
\newcommand{\tr}{\widetilde{\rho}}
\newcommand{\tz}{\widetilde{\zeta}}
\newcommand{\tk}{\widetilde{C}}
\newcommand{\tv}{\widetilde{\varphi}}
\newcommand{\hv}{\hat{\varphi}}
\newcommand{\tu}{\tilde{u}}
\newcommand{\tF}{\tilde{F}}
\newcommand{\debar}{\overline{\de}}
\newcommand{\Z}{\mathbb Z}
\newcommand{\C}{\mathbb C}
\newcommand{\Po}{\mathbb P}
\newcommand{\zbar}{\overline{z}}
\newcommand{\G}{\mathcal{G}}
\newcommand{\So}{\mathcal{S}}
\newcommand{\Ko}{\mathcal{K}}
\newcommand{\U}{\mathcal{U}}
\newcommand{\B}{\mathbb B}
\newcommand{\oB}{\overline{\mathbb B}}
\newcommand{\Cur}{\mathcal D}
\newcommand{\Dis}{\mathcal Dis}
\newcommand{\Levi}{\mathcal L}
\newcommand{\SP}{\mathcal SP}
\newcommand{\Sp}{\mathcal Q}
\newcommand{\A}{\mathcal O^{k+\alpha}(\overline{\mathbb D},\C^n)}
\newcommand{\CA}{\mathcal C^{k+\alpha}(\de{\mathbb D},\C^n)}
\newcommand{\Ma}{\mathcal M}
\newcommand{\Ac}{\mathcal O^{k+\alpha}(\overline{\mathbb D},\C^{n}\times\C^{n-1})}
\newcommand{\Acc}{\mathcal O^{k-1+\alpha}(\overline{\mathbb D},\C)}
\newcommand{\Acr}{\mathcal O^{k+\alpha}(\overline{\mathbb D},\R^{n})}
\newcommand{\Co}{\mathcal C}
\newcommand{\Hol}{{\sf Hol}(\mathbb H, \mathbb C)}
\newcommand{\Aut}{{\sf Aut}(\mathbb D)}
\newcommand{\D}{\mathbb D}
\newcommand{\oD}{\overline{\mathbb D}}
\newcommand{\oX}{\overline{X}}
\newcommand{\loc}{L^1_{\rm{loc}}}
\newcommand{\la}{\langle}
\newcommand{\ra}{\rangle}
\newcommand{\thh}{\tilde{h}}
\newcommand{\N}{\mathbb N}
\newcommand{\kd}{\kappa_D}
\newcommand{\Hr}{\mathbb H}
\newcommand{\ps}{{\sf Psh}}
\newcommand{\Hess}{{\sf Hess}}
\newcommand{\subh}{{\sf subh}}
\newcommand{\harm}{{\sf harm}}
\newcommand{\ph}{{\sf Ph}}
\newcommand{\tl}{\tilde{\lambda}}
\newcommand{\gdot}{\stackrel{\cdot}{g}}
\newcommand{\gddot}{\stackrel{\cdot\cdot}{g}}
\newcommand{\fdot}{\stackrel{\cdot}{f}}
\newcommand{\fddot}{\stackrel{\cdot\cdot}{f}}
\def\v{\varphi}
\def\Re{{\sf Re}\,}
\def\Im{{\sf Im}\,}

\title[Evolution families II: hyperbolic manifolds]{Evolution Families and the Loewner Equation II: complex hyperbolic manifolds.}
\author[F. Bracci]{Filippo Bracci}
\address{F. Bracci: Dipartimento Di Matematica\\
Universit\`{a} di Roma \textquotedblleft Tor Vergata\textquotedblright\ \\
Via Della Ricerca Scientifica 1, 00133 \\
Roma, Italy} \email{fbracci@mat.uniroma2.it}
\thanks{$^\dag$Partially supported by the \textit{Ministerio
de Ciencia e Innovaci\'on} and the European Union (FEDER),
project MTM2006-14449-C02-01, by \textit{La Consejer\'{\i}a de
Educaci\'{o}n y Ciencia de la Junta de Andaluc\'{\i}a}, and by
the European Science Foundation Research Networking Programme
HCAA}
\author[M.D. Contreras]{Manuel D. Contreras$^\dag$ }
\address{M.D. Contreras \and  S. D\'{\i}az-Madrigal: Camino de los Descubrimientos, s/n\\
Departamento de Matem\'{a}tica Aplicada II \\
Escuela T\'ecnica Superior de Ingenieros\\
Universidad de Sevilla\\
41092, Sevilla\\
Spain.} \email{contreras@esi.us.es, madrigal@us.es}
\author[S. D\'{\i}az-Madrigal]{Santiago D\'{\i}az-Madrigal$^\dag$}

\date\today

\subjclass[2000]{Primary 34M45; Secondary 32Q45, 32W99}

\keywords{evolution families; ODE's on complex manifolds;
Loewner equation; iteration theory}

\begin{abstract}
We prove that  evolution families on complex complete
hyperbolic manifolds  are in one to one correspondence with
certain semicomplete non-autonomous holomorphic vector fields,
providing the solution to a very general  Loewner type
differential equation on manifolds.
\end{abstract}

\maketitle


\def\Label#1{\label{#1}}


\def\cn{{\C^n}}
\def\cnn{{\C^{n'}}}
\def\ocn{\2{\C^n}}
\def\ocnn{\2{\C^{n'}}}
\def\je{{\6J}}
\def\jep{{\6J}_{p,p'}}
\def\th{\tilde{h}}


\def\dist{{\rm dist}}
\def\const{{\rm const}}
\def\rk{{\rm rank\,}}
\def\id{{\sf id}}
\def\aut{{\sf aut}}
\def\Aut{{\sf Aut}}
\def\CR{{\rm CR}}
\def\GL{{\sf GL}}
\def\Re{{\sf Re}\,}
\def\Im{{\sf Im}\,}
\def\U{{\sf U}}

\def\la{\langle}
\def\ra{\rangle}

\emergencystretch15pt \frenchspacing

\newtheorem{theorem}{Theorem}[section]
\newtheorem{lemma}[theorem]{Lemma}
\newtheorem{proposition}[theorem]{Proposition}
\newtheorem{corollary}[theorem]{Corollary}

\theoremstyle{definition}
\newtheorem{definition}[theorem]{Definition}
\newtheorem{example}[theorem]{Example}

\theoremstyle{remark}
\newtheorem{remark}[theorem]{Remark}
\numberwithin{equation}{section}

\section{Introduction}

In \cite{Loewner} Loewner developed a tool to embed univalent
functions into particular families  of univalent functions,
nowadays known as {\sl Loewner chains}. Such a tool has been
studied and extended by many mathematicians in the past years
and has been proved to be very effective in the solution of
various problems and conjectures. For definitions, detailed
historical discussion and applications, we refer the reader to
the books of Pommerenke \cite{Pommerenke} and Graham and Kohr
\cite{Gra-Ko-book} and to \cite{BCM2}.

Loewner chains in the unit disc are related to other two
objects: a certain type of non-autonomous holomorphic vector
fields (which we call {\sl Herglotz vector fields}) and
families of holomorphic self-maps of the unit disc,  called
{\sl evolution families}. Those three objects are related by
means of differential equations, known as {\sl Loewner
differential equations}.

In one dimension, most of the work done so far is to relate
Loewner chains to Herglotz vector fields and to evolution
families when common fixed points inside the disc are present.
The main difficulty to treat boundary fixed points and higher
dimensional cases is the lack of appropriated distortion
theorems. For instance, we present here an apparently unknown
example of the lack of a type of Koebe $1/4$-theorem in higher
dimension (the first named author thanks Francois Berteloot for
discussions about it):

\begin{example}
Let $n\geq 2$ and let $F:\C^n\to \C^n$ be a holomorphic
injective map such that $F(\C^n)$ is a proper subset of $\C^n$
(this exists by the well known {\sl Fatou-Bieberbach
phenomenon}, see, \cite{Rudin-Rosay}). Up to translation we can
assume that $F(O)=O$. For $\lambda>0$ let
$F_\lambda(z):=\lambda F(z)$ and
$\Omega_\lambda=F_\lambda(\C^n)$. Then, for all fixed
$\epsilon>0$ there exists $\lambda>0$ such that
$\hbox{dist}(\de \Omega_\lambda, O)<\epsilon$. Now, let $A$ be
an $n\times n$ invertible matrix such that $d
(F_{\lambda}(Az))_{z=O}=\id$ and let $G(z):=F_\lambda(Az)$. The
map $G:\B^n\to \C^n$ is univalent, $G(O)=O$, $dG_O=\id$ and the
image of $G(\B^n)$ does not contain the ball of radius
$\epsilon$ and centered at $O$.
\end{example}

As a matter of fact, in higher dimension or with boundary fixed
points in the unit disc, in order to relate the three objects
and to solve the appropriated Loewner differential equations,
one is forced to restrict to particular classes of mappings.

Loewner differential equations have been studied in higher and
infinite dimension (in the unit ball) especially by Graham, G.
Kohr, M. Kohr, H. Hamada, T. Poreda and J.A. Pfaltzgraff. We
refer the reader to \cite{Gra-Ko-book}, \cite{GKK}, \cite{GHKK}
and references therein.

In this paper, which is a sequel of \cite{BCM2}, we study on
complex manifolds the Loewner differential equation which
relates evolution families and Herglotz vector fields.

To state our main result we need a few definitions. Let $M$ be
a complex manifold and let $d(\cdot, \cdot)$ be a distance on
$M$.

\begin{definition}\label{evolution}
A family $(\v_{s,t})$ is called a  {\sl (continuous) evolution
family} if
\begin{enumerate}
\item $\v_{s,s}={\sf id}_M$.
  \item $\v_{s,t}=\v_{u,t}\circ \v_{s,u}$ for all $0\leq s\leq
  u\leq t<+\infty$.
  \item $\v_{s,t}:M\to M$ is holomorphic for all $0\leq s\leq
  t<+\infty$.
  \item For any $0\leq s<+\infty$ and for any compact set
  $K\subset\subset M$ the function $[s,+\infty)\ni t\mapsto
  \v_{s,t}(z)$ is locally Lipschitz continuous  uniformly with respect to $K$.
  Namely, fixed $T>0$,  there exists $c_{T,K}>0$ such
  that
  \begin{equation}\label{ck-ev}
\sup_{z\in K}d(\v_{s,t}(z), \v_{s,t'}(z))<c_{T,K}|t-t'|,
  \end{equation}
  for all  $0\leq s\leq t,t'\leq T$.
\end{enumerate}
\end{definition}

A {\sl Herglotz vector field} $G(z,t)$ on $M$ is a weak
holomorphic vector field of order $\infty$ (see Section
\ref{vector}) such that $(dk_M)_{(z,w)}(G(z,t),G(w,t))\leq 0$
for almost every $t\in [0,+\infty)$ and all $z\neq w$. Here
$k_M$ is the Kobayashi distance on $M$ and it is assumed to be
$C^1$ outside the diagonal, which is always the case for
instance if $M$ is a strongly convex domain with smooth
boundary.

Our main result is that there is a one-to-one correspondence
between  evolution families and  Herglotz vector fields, namely

\begin{theorem}\label{main}
Let $M$ be a complete hyperbolic manifold with Kobayashi
distance $k_M$. Assume that $k_M\in C^1(M\times M\setminus
\hbox{Diag})$. Then for any  Herglotz vector field $G$ there
exists a unique evolution family $(\v_{s,t})$ over $M$ such
that for all $z\in M$
  \begin{equation}\label{solve}
\frac{\de \v_{s,t}(z)}{\de t}=G(\v_{s,t}(z),t) \quad
\hbox{a.e.\ } t\in [s,+\infty).
  \end{equation}
Conversely for any  evolution family $(\v_{s,t})$ over $M$
there exists a  Herglotz vector field $G$ such that
\eqref{solve} is satisfied. Moreover, if $H$ is another weak
holomorphic vector field which satisfies \eqref{solve} then
$G(z,t)=H(z,t)$ for all $z\in M$ and almost every $t\in
[0,+\infty)$.
\end{theorem}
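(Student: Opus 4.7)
The theorem contains two independent implications together with two uniqueness statements. For the first implication, fix a Herglotz vector field $G$ and, for each pair $(s,z)\in [0,+\infty)\times M$, consider the Carath\'eodory Cauchy problem $\dot\gamma(t)=G(\gamma(t),t)$, $\gamma(s)=z$. Since $G$ is a weak holomorphic vector field of order $\infty$, in any relatively compact chart around $z$ the coefficients of $G$ are holomorphic in $z$ and essentially bounded in $t$; classical Carath\'eodory theory therefore yields a local absolutely continuous solution, which depends holomorphically on $z$ by differentiation under the integral in $\gamma(t)=z+\int_s^t G(\gamma(\tau),\tau)\,d\tau$ and Cauchy estimates on the difference quotients. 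The infinitesimal Kobayashi condition $(dk_M)_{(z,w)}(G(z,t),G(w,t))\leq 0$ makes $t\mapsto k_M(\gamma_1(t),\gamma_2(t))$ non-increasing for any pair of integral curves, which, combined with the completeness of $(M,k_M)$, prevents the trajectory from escaping every compact set in finite time and yields global existence. Setting $\varphi_{s,t}(z):=\gamma(t)$ produces an evolution family satisfying \eqref{solve}: property (i) is trivial, (ii) follows from Carath\'eodory uniqueness, (iii) from the holomorphic dependence, and (iv) from the uniform $L^\infty$ bound on $G$ on compact subsets of $M$ via the integral equation.

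For the converse, given $(\varphi_{s,t})$, the natural candidate is
\[
G(z,t):=\lim_{h\to 0^+}\frac{\varphi_{t,t+h}(z)-z}{h},
\]
read in a local chart around $z$. By property (iv), for each fixed $z$ the curves $t\mapsto\varphi_{t,t+h}(z)$ are locally Lipschitz uniformly in small $h$, so Rademacher's theorem guarantees the limit exists for almost every $t$. The Lipschitz estimate (iv) combined with Cauchy estimates shows the difference quotients form a normal family in $z$; selecting a countable dense subset of $M$ and a common null set of $t$'s yields a vector field $G(z,t)$ that is measurable in $t$ and holomorphic in $z$ for a.e.\ fixed $t$. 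The Kobayashi contraction property for $G$ descends from the standard inequality $k_M(\varphi_{t,t+h}(z),\varphi_{t,t+h}(w))\leq k_M(z,w)$ (valid since $\varphi_{t,t+h}$ is a holomorphic self-map of a hyperbolic manifold) by subtracting $k_M(z,w)$, dividing by $h$, and passing to the limit, using the $C^1$ hypothesis on $k_M$. Finally, rewriting $\varphi_{s,t+h}=\varphi_{t,t+h}\circ\varphi_{s,t}$ via (ii) and dividing by $h$ converts the pointwise a.e.\ differentiability at $t=s$ into the full Loewner ODE $\partial_t\varphi_{s,t}(z)=G(\varphi_{s,t}(z),t)$ for almost every $t\geq s$.

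The main difficulty lies in the measurability bookkeeping of the converse direction: producing a single null set of $t$'s outside which the derivative defining $G$ exists simultaneously for every $z\in M$, and for which the resulting vector field is genuinely holomorphic. This is handled by replacing a.e.\ convergence on all of $M$ with a.e.\ convergence on a countable dense subset and then invoking the equicontinuity coming from (iv) and Cauchy estimates. Uniqueness of $G$ is then immediate: if another weak holomorphic vector field $H$ also satisfies \eqref{solve}, evaluating the identity $G(\varphi_{s,t}(z),t)=H(\varphi_{s,t}(z),t)$ at $t=s$ using $\varphi_{s,s}=\mathrm{id}_M$ forces $G(z,s)=H(z,s)$ for almost every $s$ and every $z$, which by a Fubini argument is the required a.e.\ statement. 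Uniqueness of the evolution family in the first direction is a direct consequence of Carath\'eodory's uniqueness theorem applied to \eqref{solve} with a common initial condition.
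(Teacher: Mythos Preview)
Your outline follows the paper's two-proposition structure and most of the ideas are right, but there is one genuine gap in the first implication. You write that the Kobayashi monotonicity of $t\mapsto k_M(\gamma_1(t),\gamma_2(t))$, ``combined with the completeness of $(M,k_M)$, prevents the trajectory from escaping every compact set in finite time.'' This does not follow: the contraction inequality only bounds the \emph{relative} Kobayashi distance between two integral curves, and nothing stops \emph{all} trajectories from running off to infinity together at a common finite escape time $I$. What the paper actually does (Step~3 of Proposition~\ref{her-to-ev}) is prove, by a Picard iteration in a fixed chart around some $z_0$, that there is a $\delta>0$ \emph{independent of $s$} with $I(s,z_0)\ge s+\delta$; together with the equality $I(s,z)=I$ for all $s<I$ and all $z$ (which \emph{does} come from the Kobayashi monotonicity plus completeness, as in your sketch), this forces $I=+\infty$. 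For order $\infty$ the Picard argument can be replaced by the simpler bound $|\gamma(t)-z_0|\le C_{K,T}\,(t-s)$ coming directly from \eqref{boundL1}, but some such uniform-in-$s$ local existence step is indispensable and is missing from your plan.

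In the converse direction your approach is legitimate but technically different from the paper's. You invoke Rademacher on the Lipschitz curves and then pass to a countable dense set of $z$'s to synchronise the null sets; the paper instead keeps the difference quotients $G_h(z,t)$, observes they lie in a compact metrisable subset $\Upsilon_T$ of $\mathrm{Hol}(U',\C^n)$, and applies a \emph{measurable selector} to extract a limit $G(z,t)$ along a $t$-dependent subsequence $m_j(t)$, never needing the full limit $h\to 0$ to exist. Both routes work for order~$\infty$. Your uniqueness argument for $G$, however, is too quick: ``evaluating at $t=s$'' is not directly legal since $t=s$ may lie in the exceptional null set, and the ``Fubini argument'' you allude to still needs to be combined with the continuity of $G(\cdot,t)$, $H(\cdot,t)$ and the joint continuity $\varphi_{s,t}\to\mathrm{id}$ as $s\to t^-$ (Lemma~\ref{dx-cont}) to conclude. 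The paper sidesteps this entirely: it first proves local univalence of $\varphi_{s,t}$ via Gronwall (Step~7$'$), so that $\varphi_{s,t}(P)$ is open, and then the identity principle for holomorphic maps immediately yields $G(\cdot,t)\equiv H(\cdot,t)$ for almost every~$t$.
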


Such a theorem is not sharp in the sense that some of the
hypotheses can be lowered in order to obtain partial results.
More precisely we show in Proposition \ref{her-to-ev} that
given a Herglotz vector field of order $d\geq 1$ there exists a
unique evolution family of the same order $d$ which verifies
\eqref{solve} (see Section \ref{vector} for definitions). The
converse (for $L^\infty$ data) is proved in Proposition
\ref{ev-to-her}, which holds more generally for taut manifolds.
Some remarks about the possibility of dropping the regularity
of $k_M$ are also contained at the end of Section \ref{proof}
and Section \ref{proof2}. We also show that all the elements of
an evolution family on taut manifolds must be univalent (see
Proposition \ref{univalent}).

\section{Weak holomorphic vector fields, Herglotz vector
fields and evolution families}\label{vector}

Let $M$ be a complex manifold of complex dimension $n$. Let
$TM$ denote the complex tangent bundle of $M$ and let $||\cdot
||$ be a Hermitian metric along the fibers of $TM$. We denote
by $d(\cdot, \cdot)$ the distance induced on $M$ by $||\cdot
||$. Also, we let $k_M$ denote the Kobayashi pseudo-distance on
$M$. For definitions and properties of $k_M$, of taut manifolds
and complete hyperbolic manifolds we refer the reader to, {\sl
e.g.}, \cite{Abate} or \cite{Kob}. Here we only recall that
complete hyperbolic manifolds are taut.

\begin{definition}
A {\sl weak holomorphic vector field of order $d\geq 1$} on $M$
is a function
\[
G:M\times [0,+\infty) \to TM
\]
with the following properties:
\begin{enumerate}
\item For all $z\in M$ the function $[0,+\infty)\ni t\mapsto
G(z,t)$ is measurable.
\item For all $t\geq 0$ the function $M\ni z\mapsto
G(z,t)$ is holomorphic.
\item For all compact set $K\subset\subset M$ and all $T>0$
there exists a function $C_{K,T}\in L^d([0,T],\R^+)$ such that
\begin{equation}\label{boundL1}
    ||G(z,t)||\leq C_{K,T}(t)
\end{equation}
for all $z\in K$ and almost every $t\in [0,T]$.
\end{enumerate}
\end{definition}

\begin{lemma}\label{lipsch}
Let $G(z,t)$ be a weak holomorphic vector field of order $d\geq
1$ on $M$. Let $U$ be a coordinate open set of $M$ such that
$TM|_U\simeq U\times \C^n$. Let $P\subset\subset U$ be a
relatively compact polydisc. Let $T>0$. Then there exists
$\tk_{P,T}\in L^d([0,T],\R^+)$ such that
\begin{equation}\label{lipL1}
    | G(z,t)-G(w,t) |\leq \tk_{P,T}(t) |z-w|
\end{equation}
for all $z,w\in P$ and almost every $t\in [0,T]$ (here
$|\cdot,\cdot |$ denotes the usual Hermitian metric on $\C^n$).
\end{lemma}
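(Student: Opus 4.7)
The plan is to deduce the Lipschitz-in-$z$ estimate on $P$ from the $L^d$-in-$t$, $L^\infty$-in-$z$ bound provided by hypothesis (3), applied on a slightly enlarged polydisc, via the Cauchy integral formula. The output function $\tk_{P,T}(t)$ will come out as a fixed scalar multiple of the function $C_{K,T}(t)$ supplied by (3), and so will automatically belong to $L^d([0,T],\R^+)$.

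Concretely, I first pick an auxiliary polydisc $P'$ with $P\subset\subset P'\subset\subset U$, set $K:=\overline{P'}$, and choose $r>0$ small enough that every polydisc of polyradius $(r,\ldots,r)$ centered at a point of $P$ is contained in $P'$. By hypothesis (3) there is $C_{K,T}\in L^d([0,T],\R^+)$ with $\|G(z,t)\|\le C_{K,T}(t)$ on $K$ for a.e.\ $t\in[0,T]$. Since on the compact set $K$ the Hermitian metric $\|\cdot\|$ and the Euclidean metric $|\cdot|$ coming from the trivialization $TM|_U\simeq U\times\C^n$ are both continuous and non-degenerate, they are equivalent on $K$: there is $\kappa>0$ with $|G(z,t)|\le\kappa\,C_{K,T}(t)$ on $K$, a.e.\ $t\in[0,T]$.

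Writing $G(\cdot,t)=(G_1,\ldots,G_n)$ in coordinates and applying Cauchy's integral formula to each component $G_k(\cdot,t)$ on the polydiscs of polyradius $r$ centered at points of $P$, I obtain pointwise bounds
\[
\left|\frac{\partial G_k}{\partial z_j}(z,t)\right|\le\frac{\kappa}{r}\,C_{K,T}(t),\qquad z\in P,
\]
for all $j,k\in\{1,\ldots,n\}$ and a.e.\ $t\in[0,T]$. Because the polydisc $P$ is convex, the mean value inequality applied along the straight segment from $w$ to $z$ in $P$ yields
\[
|G(z,t)-G(w,t)|\le M\,C_{K,T}(t)\,|z-w|
\]
for all $z,w\in P$ and a.e.\ $t\in[0,T]$, where $M=M(n,r,\kappa)>0$ depends only on the choice of $P'$. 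Setting $\tk_{P,T}(t):=M\,C_{K,T}(t)$ produces the required element of $L^d([0,T],\R^+)$.

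There is no real obstacle here: the lemma is a routine packaging of the Cauchy estimates, the convexity of polydiscs, and the equivalence of Hermitian norms on compacts. The only points requiring any care are (i) choosing the auxiliary polydisc $P'$ so that a \emph{uniform} polyradius $r>0$ works for every base point in $P$, which is why one enlarges $P$ to $P'\subset\subset U$, and (ii) observing that measurability and $L^d$-integrability are preserved under multiplication by the finite scalar $M$, so $\tk_{P,T}$ inherits both from $C_{K,T}$.
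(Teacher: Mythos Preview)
Your proof is correct and follows essentially the same route as the paper: enlarge $P$ to a polydisc $P'\subset\subset U$, apply the sup bound from hypothesis (3) on $\overline{P'}$, and use the Cauchy formula to upgrade this to a Lipschitz estimate on $P$ with constant a fixed multiple of $C_{K,T}(t)$. The only cosmetic difference is that the paper writes a coordinate-by-coordinate telescoping sum and applies the one-variable Cauchy formula directly to each difference, whereas you bound the partial derivatives via the Cauchy estimates and then invoke the mean value inequality along segments in the convex set $P$; both are standard packagings of the same idea.
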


\begin{proof}
Let $(r_1,\ldots, r_n)$ be the multiradius of $P$ and let
$(\tilde{r}_1,\ldots, \tilde{r}_n)$ be the multiradius of
another polydisc $\tilde{P}$ such that $P\subset\subset
\tilde{P}\subset\subset U$. We write
\begin{equation*}
\begin{split}
|G(z,t)-G(w,t)|&\leq |G(z_1,\ldots,
z_n,t)-G(z_1,\ldots,z_{n-1},
w_n,t)|\\&+\ldots+|G(z_1,w_2\ldots, w_n,t)-G(w_1,\ldots,
w_n,t)|.
\end{split}
\end{equation*}
 By the Cauchy formula and
\eqref{boundL1} (taking into account that the Hermitian metric
$||\cdot ||$ is equivalent to the metric $| \cdot |$ of $\C^n$
on $U$) we have
\begin{equation*}
\begin{split}
|G(z_1,\ldots, z_n,t)-G(z_1,\ldots,z_{n-1}, w_n,t)|&\leq
\frac{1}{2\pi}\int_{|\xi|=\tilde{r}_n}\frac{|G(z_1,\ldots,z_{n-1},\xi,t)||z_n-w_n|}{|\xi-z_n||\xi-w_n|}|d\xi|\\
&\leq c_P C_{P,T}(t)|z_n-w_n|,
\end{split}
\end{equation*}
for some constant $c_P>0$ which depends only on $U$ and $P$.
Similar estimates hold for the other terms and thus
\eqref{lipL1} follows.
\end{proof}

\begin{remark}\label{escape}
By the Carath\'eodory theory of ODE's (see, for instance,
\cite{Coddington-Levison}) it follows that if $G(z,t)$ is a
weak holomorphic vector field on $M$, for any $(s,z)\in
[0,+\infty)\times M$ there exist  a unique $I(s,z)>s$ and a
function $x:[s, I(s,z))\to M$ such that
\begin{enumerate}
  \item $x$ is locally absolutely continuous in $t$.
  \item $x$ is the maximal solution to the following problem:
  \[
\begin{cases}
\frac{d x}{d t}(t):=x_\ast(\frac{\de}{\de t})=G(x(t),t) \quad
\hbox{ for a.e.\ }
t\in [s, I(s,z)),\\
x(s)=z.
\end{cases}
  \]
\end{enumerate}
The number $I(s,z)$ is referred to as the {\sl escaping time}
of the couple $(s,z)$.
\end{remark}

\begin{remark}\label{escapekob}
If $M$ is complete hyperbolic with Kobayashi metric $k_M$, $G$
is a weak holomorphic vector field over $M$ and $x$ is the
maximal solution for the couple $(s,z)$ with escaping time
$I(s,z)>0$ as defined in the previous remark, it follows that
for any compact set $K\subset\subset M$
\[
\limsup_{t\to I(s,z)}k_M(x(t),K)=+\infty.
\]
\end{remark}

\begin{definition}
Let $d\geq 1$. Let $M$ be a complex manifold. Assume that
$k_M\in C^1(M\times M\setminus \hbox{Diag})$. We let
\begin{equation*}
\begin{split}
\mathcal H_d(M):&=\{G(z,t) \hbox{ weak holomorphic vector field
of order $d$ }\\&: (dk_M)_{(z,w)}(G(z,t),G(w,t))\leq 0\quad
\forall z,w\in M, z\neq w \hbox{ and a.e. }t\in [0,+\infty)\}.
\end{split}
\end{equation*}
We call a {\sl Herglotz vector field of order $d$} any element
$G\in\mathcal H_d(M)$. A Herglotz vector field of order
$\infty$ is simply said a {\sl  Herglotz vector field}.
\end{definition}

\begin{remark}
According to \cite{BCD} if $M=D$ is a strongly convex domain in
$\C^n$ with smooth boundary then $G$ is a Herglotz vector field
if and only if it is a weak holomorphic vector field such that
for almost every $t\in [0,+\infty)$ the function $D\ni z\mapsto
G(z,t)$ is an infinitesimal generator of a semigroup of
holomorphic self-maps of $D$ (see, {\sl e.g.} \cite{Abate} for
definitions and properties of semigroups).
\end{remark}

\begin{definition}\label{evolutiond}
A family $(\v_{s,t})$ is called an  {\sl
 evolution family of order $d\geq 1$} if
\begin{enumerate}
\item $\v_{s,s}={\sf id}_M$.
  \item $\v_{s,t}=\v_{u,t}\circ \v_{s,u}$ for all $0\leq s\leq
  u\leq t<+\infty$.
  \item $\v_{s,t}:M\to M$ is holomorphic for all $0\leq s\leq
  t<+\infty$.
  \item For any $T>0$ and for any compact set
  $K\subset\subset M$ there exists a  function
  $c_{T,K}\in L^d([0,T],\R^+)$ such that
  \begin{equation}\label{ck-evd}
\sup_{z\in K}d(\v_{s,t}(z), \v_{s,t'}(z))\leq \int_{t'}^t
c_{T,K}(\xi)d \xi,
  \end{equation}
  for all  $0\leq s\leq t'\leq t\leq T$.
\end{enumerate}
An evolution family of order $\infty$ is simply said an {\sl
evolution family}.
\end{definition}

Since on a taut manifold $M$ the topology of pointwise
convergence on $\mathrm{Hol}(M,M)$ coincides with the topology
of uniform convergence on compacta (see, {\sl e.g.},
\cite[Corollary 2.1.17]{Abate}),  the proof of the following
lemma is similar to that of \cite[Proposition 3.4]{BCM2} and we
omit it:

\begin{lemma}\label{dx-cont}
Let $M$ be a taut complex manifold. Let $(\varphi_{s,t})$ be an
evolution family of order $d\geq 1$ in $M$. The map
$(s,t)\mapsto\varphi_{s,t}\in\mathrm{Hol}(M ,M)$ is jointly
continuous. Namely, given a compact set $K\subset M$ and two
sequences $\{s_{n}\},$ $\{t_{n}\}$ in $[0,+\infty),$ with
$0\leq s_{n}\leq t_{n},$ $s_{n}\rightarrow s,$ and
$t_{n}\rightarrow t$, then $\lim_{n\to
\infty}\varphi_{s_{n},t_{n}}=\varphi_{s,t}$ uniformly on $K.$
\end{lemma}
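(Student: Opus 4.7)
The plan is to exploit the parenthetical remark preceding the lemma: on a taut manifold $M$, pointwise convergence of sequences in $\mathrm{Hol}(M,M)$ coincides with uniform convergence on compacta, so it suffices to prove that $\varphi_{s_n,t_n}(z)\to \varphi_{s,t}(z)$ for each fixed $z\in M$. The case $s=t$ is immediate from property (4) of Definition \ref{evolutiond} applied to $K=\{z\}$, so I will assume $s<t$ and, passing to subsequences, that $s_n$ and $t_n$ are each monotone, so that for all large $n$ a definite sign combination of $s_n-s$ and $t_n-t$ holds.

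The key tool for pointwise convergence is to combine the composition property (2) with the Schwarz--Pick contraction $k_M(f(z),f(w))\le k_M(z,w)$ valid for any $f\in\mathrm{Hol}(M,M)$. In the representative case $s_n\le s$ and $t_n\ge t$, which also gives $s_n\le s\le t\le t_n$ for large $n$, I would factor
\[
\varphi_{s_n,t_n}=\varphi_{t,t_n}\circ\varphi_{s,t}\circ\varphi_{s_n,s},
\]
and, setting $w_0:=\varphi_{s,t}(z)$, insert $\varphi_{t,t_n}(w_0)$ as an intermediate point. Applying the triangle inequality and Schwarz--Pick to the outer map $\varphi_{t,t_n}$ (and then again to $\varphi_{s,t}$) yields
\[
k_M(\varphi_{s_n,t_n}(z),\varphi_{s,t}(z))\le k_M(\varphi_{s_n,s}(z),z)+k_M(\varphi_{t,t_n}(w_0),w_0).
\]
Next I would invoke property (4) of Definition \ref{evolutiond} with the compacts $\{z\}$ and $\{w_0\}$: the quantities $d(\varphi_{s_n,s}(z),z)=d(\varphi_{s_n,s}(z),\varphi_{s_n,s_n}(z))$ and $d(\varphi_{t,t_n}(w_0),w_0)=d(\varphi_{t,t_n}(w_0),\varphi_{t,t}(w_0))$ are dominated by $\int_{s_n}^{s}c_{T,\{z\}}$ and $\int_{t}^{t_n}c_{T,\{w_0\}}$, respectively, both tending to $0$. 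Since $M$ taut implies $M$ hyperbolic, the topology induced by $k_M$ coincides with the manifold topology induced by $d$; hence both $d$-estimates transfer to $k_M$ and give $k_M(\varphi_{s_n,t_n}(z),\varphi_{s,t}(z))\to 0$.

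The remaining sign combinations are handled by symmetric variants of the same trick: when $s_n\ge s$, write $\varphi_{s,t}(z)=\varphi_{s_n,t}(\varphi_{s,s_n}(z))$ and use Schwarz--Pick on $\varphi_{s_n,t_n}=\varphi_{t,t_n}\circ\varphi_{s_n,t}$ to bound $k_M(\varphi_{s_n,t}(z),\varphi_{s,t}(z))\le k_M(z,\varphi_{s,s_n}(z))$; when $t_n\le t$, factor $\varphi_{s,t}=\varphi_{t_n,t}\circ\varphi_{s,t_n}$ and estimate $d(\varphi_{t_n,t}(w),w)$ by property (4) on a suitable compact set. In every case the two ingredients above suffice, after which the uniform-on-compacta upgrade is automatic by \cite[Corollary 2.1.17]{Abate}.

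The only real obstacle is the bookkeeping of the four sign combinations of $(s_n-s,\,t_n-t)$ and the need to be sure each composition actually makes sense (i.e.\ that the relevant time pairs lie in the admissible region $0\le s\le t$); no idea beyond composition, Schwarz--Pick contraction, and the time-Lipschitz estimate (4) is needed.
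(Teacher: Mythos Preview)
Your proposal is correct and is precisely the argument the paper has in mind: the paper omits the proof, remarking only that on a taut manifold pointwise and locally uniform convergence coincide on $\mathrm{Hol}(M,M)$ and that the details are as in \cite[Proposition~3.4]{BCM2}. Your reduction to pointwise convergence via \cite[Corollary~2.1.17]{Abate}, followed by the four-case bookkeeping using the cocycle identity (2), the Schwarz--Pick contraction for $k_M$, and the $L^d$-Lipschitz estimate (4), is exactly that omitted argument; the only point worth making explicit in the case $t_n\le t$ is that the ``suitable compact set'' containing the moving points $\varphi_{s,t_n}(z)$ is simply the image of the continuous curve $\tau\mapsto\varphi_{s,\tau}(z)$ over $[s,t]$, whose continuity is itself a consequence of (4).
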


\section{From Herglotz vector fields to evolution families}\label{proof}

In this section we prove that to each Herglotz vector field of
order $d\geq 1$ there corresponds a unique evolution family of
order $d\geq 1$.

\begin{proposition}\label{her-to-ev}
Let $M$ be a complete hyperbolic manifold with Kobayashi
distance $k_M$. Assume that $k_M\in C^1(M\times M\setminus
\hbox{Diag})$. Let $d\geq 1$ and let $G(z,t)$ be a Herglotz
vector field of order $d\geq 1$ on $M$. Then there exists a
unique evolution family  $(\v_{s,t})$ of order $d$ on $M$ which
verifies \eqref{solve}.
\end{proposition}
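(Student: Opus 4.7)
The plan is to build the family $(\varphi_{s,t})$ pointwise via Carathéodory ODE theory and then exploit the Herglotz contraction of $k_M$ to rule out finite-time blow-up. By Lemma \ref{lipsch}, in any coordinate chart $G$ is locally $L^d$-Lipschitz in $z$; together with the $L^d$-bound \eqref{boundL1}, this places us in the Carathéodory framework, and Remark \ref{escape} gives, for each $(s,z)\in[0,+\infty)\times M$, a unique maximal absolutely continuous solution $x_{s,z}:[s,I(s,z))\to M$ to $\dot x = G(x,t)$ with $x(s)=z$. I set $\varphi_{s,t}(z):=x_{s,z}(t)$ and aim first to show $I(s,z)=+\infty$ for every $(s,z)$, then to verify the axioms of Definition \ref{evolutiond}.

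The first sub-step is to show that all escape times coincide. Fix $z_0\in M$, set $y(t):=x_{s,z_0}(t)$, and for any $z\in M$ use the $C^1$-regularity of $k_M$ together with the Herglotz condition to compute, on the common existence interval,
\[
\frac{d}{dt}\,k_M(x_{s,z}(t),y(t))=(dk_M)_{(x_{s,z}(t),y(t))}\bigl(G(x_{s,z}(t),t),G(y(t),t)\bigr)\le 0,
\]
so $k_M(x_{s,z}(t),y(t))\le k_M(z,z_0)$ throughout. If $I(s,z)<I(s,z_0)$, then $y$ stays in the compact arc $y([s,I(s,z)])$, whose closed $k_M$-neighborhood of radius $k_M(z,z_0)$ is again compact by completeness of $k_M$; this confines $x_{s,z}$ to a compact set as $t\uparrow I(s,z)$, contradicting Remark \ref{escapekob}. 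Interchanging the roles of $z$ and $z_0$ yields $I(s,z)=I(s,z_0)=:T^{\ast}(s)$, independent of $z$.

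The principal obstacle is now to show $T^{\ast}(s)=+\infty$. Supposing $T^{\ast}(s)<+\infty$, the strategy is to study the family $\{\varphi_{s,t}\}_{t\uparrow T^{\ast}(s)}\subset\mathrm{Hol}(M,M)$: the $k_M$-contraction makes the family uniformly $k_M$-Lipschitz in $z$, and the bound \eqref{boundL1} makes it $L^d$-Lipschitz in $t$ on any compact $K'\subset\subset M$ containing an orbit (and such $K'$ can be built from a single trajectory together with the contraction). Tautness of $M$ (implied by the assumed completeness of $k_M$) together with these uniform Lipschitz bounds should then allow extraction of a subsequential limit $\varphi_{s,T^{\ast}(s)}\in\mathrm{Hol}(M,M)$ as $t\uparrow T^{\ast}(s)$; restarting the Cauchy problem at $(T^{\ast}(s),\varphi_{s,T^{\ast}(s)}(z))$ by Remark \ref{escape} would then extend $x_{s,z}$ past $T^{\ast}(s)$, contradicting maximality. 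I expect this limiting/compactness step to be the technical heart of the argument.

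With global existence in hand, the remaining axioms are essentially free. The cocycle identity $\varphi_{s,t}=\varphi_{u,t}\circ\varphi_{s,u}$ follows from the uniqueness of solutions to $\dot w=G(w,t)$ with value $\varphi_{s,u}(z)$ at time $u$. Holomorphicity of $\varphi_{s,t}$ in $z$ follows from the classical holomorphic dependence of ODE solutions on complex initial data, applied in charts via the estimate of Lemma \ref{lipsch} and propagated along the orbit of any compact set. The estimate \eqref{ck-evd} is obtained by integrating \eqref{boundL1} in a chart covering a compact $K'\subset\subset M$ containing $\bigcup_{\tau\in[0,T]}\varphi_{s,\tau}(K)$, which exists by $k_M$-contraction and completeness. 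Finally, uniqueness of the evolution family satisfying \eqref{solve} is immediate from the pointwise uniqueness of the Carathéodory Cauchy problem.
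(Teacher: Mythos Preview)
Your argument has a genuine gap at the ``principal obstacle'', and the circularity sits exactly where you flag it. Suppose $T^\ast(s)<+\infty$. By Remark~\ref{escapekob}, for \emph{every} $z$ the trajectory $t\mapsto\varphi_{s,t}(z)$ leaves every compact subset of $M$ as $t\uparrow T^\ast(s)$; hence there is no compact $K'$ containing even a single orbit, and the $L^d$-in-$t$ estimate you need never gets off the ground. For the same reason, any sequence $\{\varphi_{s,t_n}\}$ with $t_n\uparrow T^\ast(s)$ is compactly divergent, so tautness yields no limit in $\mathrm{Hol}(M,M)$ to restart from. The contraction $k_M(\varphi_{s,t}(z),\varphi_{s,t}(z_0))\le k_M(z,z_0)$ only ties all orbits to one another; it says nothing about any single orbit remaining bounded.

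The paper closes this gap by a different device. After your first step (independence of $I(s,z)$ from $z$), it shows in addition that $I(s,z)=I(0,z)=:I$ for every $s<I$, by applying the contraction to the pair $z$ and $w:=\varphi_{0,s}(z)$, for which $\varphi_{s,t}(w)=\varphi_{0,t}(z)$. With $I$ independent of \emph{both} $s$ and $z$, one fixes $z_0$, works in a single coordinate ball around $z_0$, and runs a Picard iteration for the Cauchy problem with initial value $z_0$ at an \emph{arbitrary} starting time $s\in[0,I+1]$, using the bounds \eqref{boundL1} and Lemma~\ref{lipsch} on the fixed interval $[0,I+2]$. Absolute continuity of $u\mapsto\int_0^u C_{K,T}$ yields a $\delta>0$, uniform in $s$, for which the iterates stay in a fixed ball on $[s,s+\delta]$ and converge there; thus $I(s,z_0)\ge s+\delta$, and choosing $s\in(I-\delta,I)$ contradicts $I(s,z_0)=I$. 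The key point is that one never needs to control $\varphi_{0,t}(z_0)$ near $t=I$: one restarts from the \emph{fixed} point $z_0$ at a late time, where the chart and the $L^d$-data are unchanged.

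A smaller remark: holomorphic dependence on initial data in the Carath\'eodory setting (merely measurable $t$-dependence, $L^d$ bounds) is not quite a one-line citation; the paper supplies it by solving the linear variational ODE for the Jacobian in a chart and controlling the remainder via Lemma~\ref{lipsch} and Gronwall's Lemma~\ref{Gronwall}. Your sketch there is salvageable, but it deserves more than an appeal to the classical $C^1$ theory.
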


In the proof, we will use the well-known Gronwall  Lemma. For
the reader convenience we state it here  as needed for our
aims.

\begin{lemma}
\label{Gronwall} Let $\theta:[a,b]\rightarrow\mathbb{R}$ be a
continuous function and $k\in L^{1}([a,b],\mathbb{R)}$
non-negative. If there exists $C\geq0$ such that for all
$t\in\lbrack a,b]$
\[
\theta(t)\leq C+\int_{a}^{t}\theta(\xi)k(\xi)d\xi\qquad\text{ (resp., }%
\theta(t)\leq C+\int_{t}^{b}\theta(\xi)k(\xi)d\xi),
\]
then%
\[
\theta(t)\leq C\exp\left(  \int_{a}^{t}k(\xi)d\xi\right)
\qquad\text{ (resp., }\theta(t)\leq C\exp\left(
\int_{t}^{b}k(\xi)d\xi\right)  \text{)}.
\]
\end{lemma}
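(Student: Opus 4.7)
The plan is to apply the classical integrating-factor technique. I would treat the forward form in detail and then obtain the backward form by a symmetric argument (or by the change of variable $t\mapsto a+b-t$).

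Step one. Set $\Psi(t) := C + \int_a^t \theta(\xi) k(\xi)\,d\xi$. Because $\theta$ is continuous on $[a,b]$ (hence bounded) and $k\in L^1([a,b],\mathbb{R})$, the product $\theta k$ lies in $L^1([a,b],\mathbb{R})$, so $\Psi$ is absolutely continuous on $[a,b]$ with $\Psi'(t) = \theta(t)k(t)$ for a.e.\ $t$. The hypothesis reads $\theta(t)\leq \Psi(t)$, and since $k\geq 0$ this yields
$$\Psi'(t)\;\leq\;\Psi(t)k(t)\qquad\text{for a.e. }t\in[a,b].$$

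Step two. Let $K(t) := \int_a^t k(\xi)\,d\xi$ and form the auxiliary function $F(t) := \Psi(t)\,e^{-K(t)}$. Both factors are absolutely continuous on $[a,b]$, so the product rule applies and
$$F'(t)\;=\;\bigl(\Psi'(t) - \Psi(t)k(t)\bigr)\,e^{-K(t)}\;\leq\; 0 \qquad\text{a.e.}$$
Being absolutely continuous with a.e.\ nonpositive derivative, $F$ is non-increasing on $[a,b]$, whence $F(t)\leq F(a) = C$. Rearranging gives $\Psi(t)\leq C\exp(K(t))$, and combining with $\theta(t)\leq \Psi(t)$ produces the desired bound $\theta(t)\leq C\exp\bigl(\int_a^t k(\xi)\,d\xi\bigr)$.

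I do not foresee a real obstacle: the one point to record carefully is the absolute continuity of $\Psi$ (so that the fundamental theorem of calculus and the product rule hold a.e.), which is immediate from continuity of $\theta$ and $L^1$-integrability of $k$, and requires no sign assumption on $\theta$ or $C$. The reverse inequality follows by the same scheme, setting $\Phi(t) := C + \int_t^b \theta(\xi)k(\xi)\,d\xi$ and $L(t):=\int_t^b k(\xi)\,d\xi$; one verifies that $\Phi(t)\,e^{-L(t)}$ is non-decreasing on $[a,b]$, so $\Phi(t)e^{-L(t)}\leq \Phi(b)e^{-L(b)} = C$, and the conclusion follows from $\theta(t)\leq \Phi(t)$.
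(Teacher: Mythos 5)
Your argument is correct and complete: the integrating-factor computation is the standard proof of Gronwall's inequality, and you rightly isolate the only delicate point, namely the absolute continuity of $\Psi$ (and of the product $\Psi e^{-K}$) so that the a.e.\ product rule and the monotonicity conclusion are legitimate. The paper itself gives no proof of this lemma --- it is stated as ``the well-known Gronwall Lemma'' and used as a black box --- so there is nothing to compare against; your write-up supplies exactly the argument the authors are implicitly invoking, including the backward variant via the function $\Phi(t)e^{-L(t)}$.
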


\begin{proof}[Proof of Proposition \ref{her-to-ev}]

For any $s\in [0,+\infty)$ and $z\in M$, let $\v_{s,t}(z)$ be
the maximal solution with escaping time $I(s,z)>s$ (see Remark
\ref{escape}) which satisfies
\[
\begin{cases}
\frac{\de\v_{s,t}}{\de t}=G(\v_{s,t}(z),t) \quad \hbox{ for
a.e.\ }
t\in [s, I(s,z)),\\
\v_{s,s}(z)=z.
\end{cases}
\]
For all $s\geq 0$ and $z\in M$ the curve $[s,I(s,z))\ni
t\mapsto \v_{s,t}(z)$ is locally absolutely continuous.

{\bf 1.} {\sl For all $z,w\in M$ it holds  $I(0,z)=I(0,w)$}.
Indeed, fix $z,w\in M$. Then the function $h:[0,+\infty)\ni
t\mapsto k_M(\v_{0,t}(z), \v_{0,t}(w))$ is locally absolutely
continuous. Differentiating we get
\begin{equation*}
\begin{split}
\overset{\bullet}{h}(t)&=\frac{\de}{\de t}k_M(\v_{0,t}(z),
\v_{0,t}(w))\\&=(dk_M)_{(\v_{0,t}(z),
\v_{0,t}(w))}(G(\v_{0,t}(z),t),(\v_{0,t}(w),t))\leq 0\quad
\hbox{a.e.\ }t\in[0,+\infty).
\end{split}
\end{equation*}
Hence $h$ is decreasing in $t$ and therefore $h(t)\leq h(0)$
for all $t\in[0,+\infty)$. If $I(0,z)<I(0,w)$, since $k_M$ is
complete hyperbolic and
$\{\v_{0,t}(w)\}_{t\in[0,I(0,z)]}\subset\subset M$, by Remark
\ref{escapekob}, we have
\[
+\infty = \limsup_{t\to I(0,z)}k_M(\v_{0,t}(z),
\v_{0,t}(w))\leq k_M(z, w)<+\infty,
\]
a contradiction.  Then $I(0,z)\geq I(0,w)$. A similar argument
shows that $I(0,z)\leq I(0,w)$, thus Step 1 follows.

We let  $I:=I(0,z)$ for some $z\in M$.

{\bf 2.} {\sl For all $s<I$ and all $z\in M$ it follows
$I(s,z)=I$.} Indeed, repeating the argument in Step 1 with $s$
substituting $0$, we obtain that for all $z,w\in M$ and $0\leq
s\leq t<\min\{I(s,z), I(s,w)\}$ it follows
\begin{equation}\label{dis}
k_M(\v_{s,t}(z),\v_{s,t}(w))\leq k_M(z,w).
\end{equation}
Now, let $w=\v_{0,s}(z)$. Notice that $w$ is well defined if
$s<I$. By uniqueness of solutions of ODE's it follows that
$\v_{s,t}(w)=\v_{s,t}(\v_{0,s}(z))=\v_{0,t}(z)$. Therefore,
from \eqref{dis} it follows
\[
k_M(\v_{s,t}(z),\v_{0,t}(z))\leq k_M(z,\v_{0,s}(z)).
\]
Arguing as in Step 1 we obtain that $I(s,z)=I$ for all $s<I$.

{\bf 3.} {\sl $I=+\infty$}  (and thus $I(s,z)=+\infty$ for all
couple $(s,z)\in [0,+\infty)\times M$ by Step 2).

Fix $z_0\in M$. We are going to show that there exists
$\delta>0$ such that for all $s\in [0, I)$ it follows
$I(s,z)\geq s+\delta$. If this is true and $I<+\infty$, then
letting $I-\delta<s<I$ it follows that $I(s,z)>I$ contradicting
Step 2, and thus $I=+\infty$.

To prove the existence of $\delta>0$ as before, let $U$ be a
local chart of $M$ which trivializes $TM$ and such that $z_0$
has coordinates $O$. With no loss of generality we can assume
that $U$ contains  a closed polydisc $P$ which contains the
ball $\B$ of radius $1$ and center $O$ in $\C^n$. Let $r<1$.
Let $\B_r:=\{z\in \C^n: |z|\leq r\}$. By the very definition of
weak holomorphic vector field of order $d$ there exists
$C:=C_{\B_r,[0,I+2]}\in L^d([0,I+2],\R^+)$ such that
\begin{equation}\label{es-G}
    |G(z,t)|\leq C(t)
\end{equation}
for all $z\in \B_r$ and almost every $t\in [0,I+2]$. Moreover,
by Lemma \ref{lipsch}, there exists $\tk :=\tk_{P,[0,I+2]}\in
L^d([0,I+2],\R^+)$ such that
\begin{equation}\label{es-G2}
    |G(z,t)-G(w,t)|\leq \tk(t)|z-w|
\end{equation}
for all $z,w\in \B_r$ and almost every $t\in [0,I+2]$.

The functions $[0,I+2]\ni u\mapsto \int_0^{u}C(\tau)d\tau$,
$[0,I+2]\ni u\mapsto \int_0^{u}\tk(\tau)d\tau$ are absolutely
continuous and therefore there exists $\delta>0$ (which we can
suppose strictly less than $1$) such that for all $s\in
[0,I+1]$ it holds
\begin{equation}\label{stima1}
    \int_s^{s+\delta} C(\tau)d\tau \leq r, \quad \int_s^{s+\delta} \tk(\tau)d\tau \leq
    r.
\end{equation}
For $s\in [0,I+1]$ let us define by induction
\[
\begin{cases}
x^s_0(t):=O\\
x^s_n(t)=\int_s^t G(x_{n-1}^s(\tau),\tau)d\tau \quad t\in
[s,s+\delta].
\end{cases}
\]
We notice that $|x^s_0(t)|=0<r$. Assuming that
$|x^s_{n-1}(t)|\leq r$ for all $t\in [s,s+\delta]$,   by
\eqref{es-G} and \eqref{stima1} we have
\[
|x^s_n(t)|\leq \int_s^t |G(x_{n-1}^s(\tau),\tau)|d\tau\leq
\int_s^t C(\tau)d\tau \leq r, \quad t\in [s,s+\delta]
\]
which, by induction, implies that $x_n^s(t)$ is well defined
for all $n\in \N$ and $t\in [s,s+\delta]$ and $|x^s_n(t)|\leq
r$.

Now, by \eqref{es-G2} and \eqref{stima1} we have
\begin{equation*}
\begin{split}
|x_n^s(t)-x_{n-1}^s(t)|&\leq \int_s^t
|G(x_{n-1}^s(\tau),\tau)-G(x_{n-2}^s(\tau),\tau)|d\tau\\ &\leq
\int_s^t \tk(\tau)|x_{n-1}^s(\tau)-x_{n-2}^s(\tau)|d\tau
\\ & \leq  \max_{\tau\in [s,s+\delta]}
|x_{n-1}^s(\tau)-x_{n-2}^s(\tau)|\int_s^t \tk(\tau)d\tau,
\end{split}
\end{equation*}
for $n\in \N$ and $t\in [s,s+\delta]$. From this follows   that
$\{x_n^s\}$ is a Cauchy sequence in the Banach space
$C^0([s,s+\delta], \C^n)$. Therefore it converges uniformly on
$[s,s+\delta]$ to a function $x^s\in C^0([s,s+\delta], \B_r)$.
By \eqref{es-G} and the Lebesgue dominated converge theorem it
follows that
\[
x^s(t)=\int_s^t G(x^s(\tau),\tau)d\tau \quad \forall t\in
[s,s+\delta],
\]
or, in other words,
\[
\v_{s,t}(z_0)=x^s(t) \quad \forall t\in [s,s+\delta]
\]
which proves that $I(z_0,s)\geq s+\delta$ as needed.

{\bf 4.} {\sl $(\v_{s,t})$ is an ``algebraic'' evolution
family}, namely, Properties (1) and (2) of Definition
\ref{evolutiond} hold. This follows at once by the uniqueness
of solutions of ODE's.

{\bf 5.} {\sl For all fixed $0\leq s\leq t<+\infty$ the map
$M\ni z\mapsto \v_{s,t}(z)\in M$ is holomorphic}. Let $z_0\in
M$ and let $0\leq s\leq t<+\infty$. The  absolutely continuous
(compact) curve $[s,t]\ni \eta\mapsto \v_{\eta,t}(z_0)$ is
covered by a finite number of coordinates charts, so that we
can find a partition $s=t_0<t_1<\ldots<t_m=t$ such that each
curve $[t_j,t_{j+1}]\ni \eta\mapsto \v_{\eta,t_{j+1}}(z_0)$ is
contained in a coordinates chart. By Property (2) of Definition
\ref{evolutiond} (which holds for $(\v_{s,t})$ by Step 4),
holomorphicity of $\v_{s,t}$ at $z_0$ will follow as soon as we
can show holomorphicity of $\v_{t_{j-1},t_j}$ at
$\v_{s,t_j}(z_0)$ for $j=1,\ldots, m$. Therefore, we can
suppose that the curve $[s,t]\ni \eta\mapsto \v_{\eta,t}(z_0)$
is contained in a local chart.

By \eqref{dis}, for any open neighborhood $V$ of $z_0$,
relatively compact in $U$, there exists a open neighborhood
$W\subset U$ of $z_0$ such that for any $w\in W$ and all
$\eta\in [s,t]$ it follows that $\v_{s,\eta}(w)\in V$.

Since holomorphy is a local property, we can work on the local
chart $U$ (centered at $z_0$), which we may assume  is the ball
$\B:=\{z\in \C^n: |z|<1\}$ of center $O$ and radius $1$. In
such local coordinates we can write $G=(G_1,\ldots, G_n)$. For
$\eta\in [s,t]$ let
\[
A(\eta):=\left(
           \begin{array}{ccc}
             \frac{\de G_1(\v_{s,\eta}(O),\eta)}{\de z_1} & \ldots & \frac{\de G_1(\v_{s,\eta}(O),\eta)}{\de z_n} \\
             \vdots & \vdots & \vdots \\
             \frac{\de G_n(\v_{s,\eta}(O),\eta)}{\de z_1} & \ldots & \frac{\de G_n(\v_{s,\eta}(O),\eta)}{\de z_n} \\
           \end{array}
         \right).
\]
By Cauchy formula and \eqref{boundL1} (arguing similarly to the
proof of Lemma \ref{lipsch}), each entry of $A(\eta)$ is a
$L^d$-measurable function in $\eta$. It is well known that the
following system of ODE's
\[
\begin{cases}
\frac{d H}{d \eta}(\eta)=-H(\eta)\cdot A(\eta)\\
H(s)=\sf{Id}
\end{cases}
\]
has a unique continuous solution $H(\eta)$ which is an
invertible $n\times n$ matrix for all $\eta$.

Let $v\in \C^n$ be such that $|v|=1$. We will prove that
\begin{equation}\label{daprov1}
   \lim_{h\in \C,|h|\to
   0}\frac{\v_{s,t}(hv)-\v_{s,t}(0)}{h}=H^{-1}(t)\cdot v,
\end{equation}
showing that $z\mapsto \v_{s,t}(z)$ is holomorphic.

Since the topology induced by $k_M$ coincides with the one of
$M$, the previous argument based on \eqref{dis} shows that
there exists $\delta>0$ such that
\begin{equation}\label{sup-est}
\sup\{|\v_{s,\eta}(z)|: |z|< \delta, \eta\in [s,t]
\}<\frac{1}{5n}.
\end{equation}
Let $P:=\{z\in \B^n: \max_j |z_j|<1/(4n)\}$ be the polydisc of
center $O$ and multi-radius $1/(4n)$. Let $\tk_{P,t}\in
L^d([0,t])$ be as in  \eqref{lipL1} (with $T=t$). We let
\[
\theta(\eta):=|\v_{s,\eta}(hv)-\v_{s,\eta}(0)|,
\]
where $h\in \C$ is such that $|h|<\delta$ and $\eta\in [s,t]$.
Then $\theta\in C^0([s,t], \R^+)$. Since $\v_{s,\eta}(z)$
solves \eqref{solve}, and $\v_{s,\eta}(hv)\in P$ for all $\eta
\in [s,t]$ and $|h|<\delta$ by \eqref{sup-est}, it follows that
\begin{equation*}
\begin{split}
\theta(\eta)&=\left|hv+\int_s^\eta
G(\v_{s,\xi}(hv),\xi)d\xi-\int_s^\eta
G(\v_{s,\xi}(O),\xi)d\xi\right|\\ &\leq |h|+\int_s^\eta
\left|G(\v_{s,\xi}(hv),\xi)-G(\v_{s,\xi}(O),\xi)\right|\
d\xi\\&\leq \theta(s)+\int_s^\eta \tk_{P,t}(\xi)d\xi.
\end{split}
\end{equation*}
Gronwall's Lemma \ref{Gronwall} implies then
\[
\theta(\eta)\leq \theta(s)\exp\left(\int_s^\eta
\tk_{P,t}(\xi)d\xi \right)\quad \forall \eta\in [s,t].
\]
Therefore, setting $C(s,t,P):=\exp(\int_s^t
\tk_{P,t}(\xi)d\xi)$, we have
\begin{equation}\label{A-1}
    |\v_{s,\eta}(hv)-\v_{s,\eta}(0)|\leq |h|C(s,t,P) \quad \forall \eta\in [s,t].
\end{equation}
Fix $h$ and let
$f_h(\eta):=(\v_{s,\eta}(hv)-\v_{s,\eta}(O))/h$. Let us denote
$\v_{s,\eta}(z)=(\v_{s,\eta}^1(z),\ldots, \v_{s,\eta}^n(z))$.
In what follows we assume that $\v_{s,\eta}^j(hv)\neq
\v^j_{s,\eta}(O)$ for all $0<h<<1$, $j=1,\ldots, n$, and leave
to the reader the (obvious) changes in case
$\v_{s,\eta}^j(hv)\equiv \v^j_{s,\eta}(O)$. Then, for almost
every $\eta\in [s,t]$, it follows
\begin{equation*}
\begin{split}
\frac{d f_h}{d \eta}(\eta)&=\frac{\frac{\de
\v_{s,\eta}(hv)}{\de \eta}-\frac{\de \v_{s,\eta}(O)}{\de
\eta}}{h}=\frac{G(\v_{s,\eta}(hv),\eta)-G(\v_{s,\eta}(O),\eta)}{h}\\&=
\frac{(\v_{s,\eta}^1(hv)-\v^1_{s,\eta}(O))}{h}\frac{G(\v_{s,\eta}(hv),\eta)-G((\v_{s,\eta}^1(O),
\v_{s,\eta}^2(hv),\ldots,\v^n_{s,\eta}(O)),\eta)}{\v_{s,\eta}^1(hv)-\v^1_{s,\eta}(O)}\\
&+\ldots+\frac{(\v_{s,\eta}^n(hv)-\v^n_{s,\eta}(O))}{h}\frac{G((\v_{s,\eta}^1(O),\ldots,
\v_{s,\eta}^{n-1}(O),\v^n_{s,\eta}(hv)),\eta)-G(\v_{s,\eta}(O),\eta)}{\v_{s,\eta}^n(hv)-\v^n_{s,\eta}(O)}
\\ &=A(\eta) f_h(\eta) + \sum_{j=1}^n L^h_j(\eta),
\end{split}
\end{equation*}
where
\begin{equation*}
\begin{split}
L^h_1(\eta)=\frac{(\v_{s,\eta}^1(hv)-\v^1_{s,\eta}(O))}{h}&\left[\frac{G(\v_{s,\eta}(hv),\eta)-G((\v_{s,\eta}^1(O),
\v_{s,\eta}^2(hv),\ldots,\v^n_{s,\eta}(O)),\eta)}{\v_{s,\eta}^1(hv)-\v^1_{s,\eta}(O)}\right.\\&
\left.-\frac{\de G}{\de z_1}(\v_{s,\eta}(O),\eta) \right],
\end{split}
\end{equation*}
and $L^h_2(\eta),\ldots, L^h_n(\eta)$ are defined similarly.
Thus, multiplying by $H(\eta)$,
\[
H(\eta)\frac{d f_h}{d \eta}(\eta)-H(\eta)A(\eta) f_h(\eta) =
H(\eta)\sum_{j=1}^n L^h_j(\eta)\quad \hbox{a.e. } \eta \in
[s,t],
\]
and, by the very definition of $H$, it holds
\[
\frac{d}{d \eta}(H(\eta)f_h(\eta))=H(\eta)\sum_{j=1}^n
L^h_j(\eta)\quad \hbox{a.e. } \eta \in [s,t].
\]
Integrating  we obtain
\[
H(t)f_h(t)-f_h(s)=\sum_{j=1}^n\int_{s}^tH(\eta) L^h_j(\eta)d
\eta.
\]
But $f_h(s)=v$ for all $h$. Moreover \eqref{lipL1} and
\eqref{A-1} imply that there exists a constant $a>0$
(independent of $h$) such that  $|H(\eta) L^h_j(\eta)|\leq a
\tk_{P,t}(\eta)$ for all $h$. The Lebesgue dominated
convergence theorem implies then $\lim_{h\to
0}\int_{s}^tH(\eta) L^h_j(\eta)d \eta=0$. Hence
\[
\lim_{h\in \C, h\to 0}H(t)f_h(t)-v=0,
\]
proving \eqref{daprov1}.

{\bf 6.} {\sl for all fixed $T>0$ and $K\subset\subset M$
compact set, there exists a  function  $c_{T,K}\in
L^d([0,T],\R^+)$ such that \eqref{ck-evd} is satisfied
  for all  $0\leq s\leq t'\leq t\leq T$}. Fix $s\geq 0$ and $K\subset\subset M$. Let $T>0$ and assume
$s\leq t'\leq t\leq T$. Let $V$ be a compact set in $M$ which
contains $\{\v_{s,\eta}(z)\}$ for all $z\in K$ and $\eta\in
[s,T]$. Since the curve $\eta\mapsto \v_{s,\eta}(z)$ is
absolutely continuous, and the distance between two points can
be computed as the infimum of absolutely continuous curves
(see, {\sl e.g.}, \cite{Fed}),  by \eqref{boundL1}
\begin{equation*}
\begin{split}
d(\v_{s,t'}(z),\v_{s,t}(z))&\leq \int_{t'}^t \|\frac{\de
\v_{s,\eta}(z)}{\de t}\|d\eta=\int_{t'}^t
\|G(\v_{s,\eta}(z),\eta)\|d\eta\\&\leq \int_{t'}^t
C_{T,V}(\tau)d\tau,
\end{split}
\end{equation*}
and we are done.

Thus $\{\v_{s,t}(z)\}$ is an evolution family of order $d\geq
1$ over $M$. Uniqueness follows at once from the uniqueness of
solutions of ODE's.
\end{proof}

\begin{remark}
One can drop the hypothesis of regularity of $k_M$ from the
statement of Proposition \ref{her-to-ev} by giving a meaning in
the sense of currents and distributions to the inequality
$dk_M(G,G)\leq 0$, similarly to what has  been done by M. Abate
in \cite{Abatex} for infinitesimal generators of semigroups
using the Kobayashi infinitesimal metric. We leave the details
to the interested reader.
\end{remark}

\section{From  evolution families to
Herglotz vector fields}\label{proof2}

\begin{proposition}\label{ev-to-her}
Let $M$ be a taut manifold with Kobayashi distance $k_M$.
Assume that $k_M\in C^1(M\times M\setminus \hbox{Diag})$. Then
for any  evolution family $(\v_{s,t})$ over $M$ there exists a
 Herglotz vector field $G\in \mathcal H_\infty(M)$
which verifies \eqref{solve}.
 Moreover, if $H$
is another weak holomorphic vector field which satisfies
\eqref{solve} then $G(z,t)=H(z,t)$ for all $z\in M$ and almost
every $t\in [0,+\infty)$.
\end{proposition}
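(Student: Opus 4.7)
Since the evolution family has order $\infty$, Property (4) of Definition \ref{evolutiond} implies that, in a coordinate chart trivialising $TM$, the curves $t\mapsto\v_{s,t}(z)$ are locally Lipschitz in $t$ uniformly on compacta. The natural candidate for $G$ is the right time-derivative on the diagonal,
\[
G(z,t):=\lim_{h\to 0^+}G_h(z,t), \qquad G_h(z,t):=\frac{\v_{t,t+h}(z)-z}{h},
\]
read in local coordinates. Each $G_h(\cdot,t)$ is holomorphic, and the Lipschitz bound makes $\{G_h(\cdot,t)\}_h$ locally uniformly bounded; tautness of $M$ and Montel's theorem then ensure that $\{G_h(\cdot,t)\}_h$ is a normal family. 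The first task is to show that the limit exists and is subsequence-independent for almost every $t$.

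To this end I would fix a countable dense set $\{z_k\}\subset M$. Each curve $t\mapsto\v_{0,t}(z_k)$ is locally Lipschitz, hence differentiable on a full-measure set $E_k$, and $E:=\bigcap_k E_k$ still has full measure. For $t\in E$, the evolution property gives
\[
\v_{0,t+h}(z_k)-\v_{0,t}(z_k)=\v_{t,t+h}(\v_{0,t}(z_k))-\v_{0,t}(z_k)=h\,G_h(\v_{0,t}(z_k),t),
\]
forcing every subsequential limit of $G_h(\cdot,t)$ to attain the prescribed value $\partial_t\v_{0,t}(z_k)$ at each point $\v_{0,t}(z_k)$. By Proposition \ref{univalent}, $\v_{0,t}$ is univalent, so $\{\v_{0,t}(z_k)\}$ is dense in the open set $\v_{0,t}(M)$; by the identity principle, the holomorphic subsequential limit is uniquely determined on $\v_{0,t}(M)$, and therefore on all of $M$. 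Consequently $G(z,t)$ is well defined for $z\in M$ and $t\in E$, holomorphic in $z$, measurable in $t$ (as a pointwise limit of the continuous-in-$t$ functions $G_{1/n}(z,t)$, continuity coming from Lemma \ref{dx-cont}), and locally bounded in $L^\infty$ by the Lipschitz constant furnished by Definition \ref{evolutiond}.

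To verify \eqref{solve} I would use the evolution property once more: for $s\leq t$, $z\in M$, $t\in E$,
\[
\frac{\v_{s,t+h}(z)-\v_{s,t}(z)}{h}=G_h(\v_{s,t}(z),t)\longrightarrow G(\v_{s,t}(z),t)\quad\text{as }h\to 0^+.
\]
Since $t\mapsto\v_{s,t}(z)$ is Lipschitz in $t$, it is differentiable almost everywhere, and at such points the derivative equals $G(\v_{s,t}(z),t)$. For the Herglotz inequality, fix $t_0\in E$ and $z\neq w$ in $M$. Each $\v_{t_0,t_0+h}$ being a holomorphic self-map of $M$, the function $h\mapsto k_M(\v_{t_0,t_0+h}(z),\v_{t_0,t_0+h}(w))$ is non-increasing on $[0,+\infty)$ with value $k_M(z,w)$ at $h=0$; since $k_M\in C^1$ off the diagonal, its right derivative at $h=0$ computes, via the chain rule, to $(dk_M)_{(z,w)}(G(z,t_0),G(w,t_0))$, which must therefore be $\leq 0$.

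Finally, for uniqueness, if $H$ is any other weak holomorphic vector field satisfying \eqref{solve}, then $\v_{s,s+h}(z)-z=\int_s^{s+h}H(\v_{s,\tau}(z),\tau)d\tau$, and by Lebesgue differentiation $(\v_{s,s+h}(z)-z)/h\to H(z,s)$ for a.e.\ $s$, matching $G(z,s)$; a countable dense set argument combined with holomorphy spreads this equality to $G(\cdot,s)=H(\cdot,s)$ on all of $M$ for a.e.\ $s$. I expect the main obstacle to lie in the second paragraph: converting the normal family $\{G_h(\cdot,t)\}$ into a single, well-defined holomorphic limit requires combining Montel's theorem with the univalence of $\v_{0,t}$ and the identity principle to promote countable, almost-everywhere information into pointwise information on all of $M$.
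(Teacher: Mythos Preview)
Your construction of $G$ differs from the paper's in a real way.  The paper does \emph{not} try to show that $\lim_{h\to0^+}G_h(z,t)$ exists; instead it invokes a \emph{measurable selector} on the compact family $\{G_{1/m}(\cdot,t)\}_m\subset\Upsilon_T$ to pick, for each $t$, some accumulation point $G(\cdot,t)$ in a $t$-measurable manner, verifies the ODE afterwards, and only then proves local univalence of $\v_{s,t}$ via Gronwall applied to the already-constructed $G$ (Step~7').  Your route---pin the subsequential limits down on a determining set and use the identity principle to force the full limit to exist---is more elementary and dispenses with the selector machinery; it also yields the Herglotz inequality more directly, by differentiating $h\mapsto k_M(\v_{t_0,t_0+h}(z),\v_{t_0,t_0+h}(w))$ at $h=0$, whereas the paper first obtains it at $(\v_{s,t_0}(z),\v_{s,t_0}(w))$ and then lets $s\to t_0$ along rationals.

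There is, however, a genuine circularity in your argument as written.  You invoke Proposition~\ref{univalent} to ensure $\v_{0,t}$ is univalent, so that $\{\v_{0,t}(z_k)\}$ is dense in an open set and the identity principle applies.  But in this paper the proof of Proposition~\ref{univalent} rests precisely on Steps~1'--7' of the proof of Proposition~\ref{ev-to-her}, that is, on the local construction of $G$ together with the Gronwall argument.  You cannot cite univalence to build $G$ when the available proof of univalence is itself derived from $G$.  A clean repair is to enlarge $E$ to the full-measure set where $t\mapsto\v_{s,t}(z_k)$ is differentiable for every \emph{rational} $s\le t$ and every $k$; then every subsequential limit of $G_h(\cdot,t)$ is determined at each point $\v_{s,t}(z_k)$, and since $\v_{s,t}\to\id$ as $\Q\ni s\uparrow t$ (Lemma~\ref{dx-cont}) these points accumulate at every $z_k$, giving a dense determining set without ever using univalence.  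Alternatively, one can show directly that $d\v_{s,t}\to\id$ as $s\to t$, hence $\v_{s,t}$ is locally biholomorphic for $s$ near $t$, again avoiding Proposition~\ref{univalent}.
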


\begin{proof}
 We show that the Herglotz
vector field $G(z,t)$ is ``morally'' the vector field tangent
to the (locally absolutely) continuous curve $[0,+\infty)\ni
h\mapsto \v_{t,t+h}(z)$ at $h=0$.

First of all, we fix a local coordinates chart $U$. Thus, there
exists a biholomorphism $\Phi: U\to \Phi(U)\subset \C^n$. We
are going to define a holomorphic vector field which solves
\eqref{solve} on $\Phi(U)$ and then pull it back via $\Phi$.
With a customary abuse of notation, in the sequel we avoid
writing $\Phi$, for instance, we write $\v_{s,t}(z)$ instead
$\Phi\circ \v_{s,t}(z)$ and so on.

{\bf 1'.} {\sl Local definition of an approximation family}.
Let $U'\subset\subset U$ be a relatively compact open subset of
$U$ and let $T>0$. By Lemma \ref{dx-cont} there exists
$h_0=h_0(U',T)>0$ such that $\v_{t,t+h}(z)\in U$ for all $z\in
U'$, $t\in [0,T]$
 and $0\leq h\leq h_0$. We let
\[
G_h(z,t):=\frac{\v_{t,t+h}(z)-z}{h} \quad z\in U',\quad 0<h\leq
h_0,\quad t\in [0,T].
\]

{\bf 2'.} {\sl For every $0<h\leq h_0$ fixed, the map $U'\ni
z\mapsto G_h(z,t)$ is holomorphic   for all fixed $t$}. It
follows immediately from the very definition.

{\bf 3'.} {\sl The function $[0,T]\ni t\mapsto G_h(z,t)$ is
continuous}. It follows at once from Lemma \ref{dx-cont}.

{\bf 4'.} {\sl For every compact set $K\subset\subset U'$
there exists $A_{T,K}>0$ such that
\[
|G_h(z,t)|\leq A_{T,K}
\]
for all $0<h\leq h_0$, $z\in K$ and  every $t\in [0,T]$}.
Indeed, by Property (4) of Definition \ref{evolution} we have
\[
|\v_{t,t+h}(z)-z|\leq  c_U d(\v_{t,t+h}(z),\v_{t,t}(z))\leq c_U
c_{T,K}|t+h-t|=c_Uc_{T,K}|h|,
\]
and setting $A_{T,K}=c_Uc_{T,K}$ we have the claim.

{\bf 5'.} {\sl For all $t\in [0,+\infty)$ there exists a
sequence $m_j(t)\to 0^+$ such that $G(z,t)=\lim_{j\to
\infty}G_{m_j(t)}(z,t)$ has the property that $U'\ni z\mapsto
G(z,t)$ is holomorphic for all fixed $t$ and $[0,+\infty)\ni
t\mapsto G(z,t)$ is measurable for all fixed $z\in U'$.}  Let
$\{K_r\}$ be a compact exhaustion of $U'$ and let $T\in
\N\setminus\{0\}$. Let
\[
\Upsilon_T:=\{f:U'\to \C^n: \sup_{z\in K_r}|f(z)|\leq
A_{T,K_r}, r\in(0,+\infty) \},
\]
where $A_{T,K_r}>0$ is the constant given in Step 4'. By Step.
4' the family $\{G_h(z,t)\}$ is normal for every fixed $t\geq
0$, thus every accumulation point is holomorphic.

The space $\Upsilon_T$ is a closed bounded subset of ${\sf
Hol}(U,\C^n)$. Thus it is a compact metrizable space (with
respect to the topology of uniform convergence on compacta).

Let $\gamma_T: \Upsilon_T^{\mathbb N}\to \Upsilon_T$ be the
measurable selector (see, {\sl e.g.}, \cite{Lu}). Then consider
the sequence $\{G_{1/m}(z,t)\}_{m\in \N}\subset \Upsilon_T$ and
let
\begin{equation}\label{limitG}
G^T(z,t):=\gamma_T(\{G_{1/m}(z,t)\}):=\lim_{j\to
\infty}G_{\frac{1}{m_j(t)}}(z,t).
\end{equation}
 By the very construction and
definition of measurable selector, $[T-1,T)\ni t\mapsto
G^T(z,t)$ is measurable for all $z$. Then define
$G(z,t):=G^T(z,t)$ according to whether $t\in [T-1,T)$.

{\bf 6'.} {\sl The map $G(z,t)$ is a weak holomorphic vector
field of order $\infty$ over $U'$ and satisfies \eqref{solve}.}
By Step 5', since $G(z,t)\in \Upsilon_T$, it is clear that
$G(z,t)$ is a weak holomorphic vector field of order $\infty$
on $U$. Now, by \eqref{limitG} and for almost every $t\in
[0,+\infty)$ (and for $s\leq t$ so that $\v_{s,\eta}(z)\in U'$
for $\eta\in [s,t]$) we have
\begin{equation*}
\begin{split}
\frac{\de \v_{s,t}(z)}{\de t}&=\lim_{h\to
0}\frac{\v_{s,t+h}(z)-\v_{s,t}(z)}{h}=\lim_{h\to
0}\frac{\v_{t,t+h}(\v_{s,t}(z))-\v_{s,t}(z)}{h}\\&= \lim_{j\to
\infty}\frac{\v_{t,t+h_j(t)}(\v_{s,t}(z))-\v_{s,t}(z)}{h_j(t)}=G(\v_{s,t}(z),t).
\end{split}
\end{equation*}

{\bf 7'}. {\sl For every polydisc $P\subset\subset U'$ and for
all $0\leq s\leq t$ such that there exists a polydisc
$P'\subset\subset U$ for which $\v_{s,\eta}(P)\subseteq P'$ for
all $\eta\in [s,t]$, it follows that $\v_{s,t}:P\to U$ is
univalent}. Let $z,w\in P$ and $z\neq w$. Assume by
contradiction that $\v_{s,t}(z)=\v_{s,t}(w)$. Let
\[
\theta(\eta):=|\v_{s,\eta}(z)-\v_{s,\eta}(w)|, \quad \eta\in
[s,t].
\]
Let $\tk_{K,t}\in L^\infty([0,t])$ be as in \eqref{lipL1} (with
$T=t$). Then
\begin{equation*}
\begin{split}
\theta(\eta)&=|\v_{s,\eta}(z)-\v_{s,t}(z)-\v_{s,\eta}(w)+\v_{s,t}(w)|
=\left|\int_{\eta}^t
(G(\v_{s,\xi}(w),\xi)-(G(\v_{s,\xi}(z),\xi)d\xi\right|
\\&\leq \int_{\eta}^t
|G(\v_{s,\xi}(z),\xi)-G(\v_{s,\xi}(w),\xi)|d\xi\leq |
\tk_{K,t}|_{L^\infty}\int_\eta^t \theta(\xi)d\xi.
\end{split}
\end{equation*}
Gronwall's Lemma \ref{Gronwall} implies that $\theta(\eta)= 0$
for all $\eta\in [s,t]$. But $\theta(s)=|z-w|\neq 0$, reaching
a contradiction.

{\bf 8'.} {\sl Uniqueness of $G(z,t)$ on $U'$}. We are going to
prove that if $H(z,t)$ is another weak holomorphic vector field
over $U'$ such that
\begin{equation}\label{solve2}
\frac{\de \v_{s,t}(z)}{\de t}=H(\v_{s,t}(z),t)
\end{equation}
for almost every $t\in [0,+\infty)$ (and for $s\leq t$ so that
$\v_{s,\eta}(z)\in U'$ for $\eta\in [s,t]$) then
$H(z,t)=G(z,t)$ for all $z\in U'$ and almost every $t\in
[0,+\infty)$. Let $P\subset\subset U'$ be a polydisc and let
$s\approx t$ so that $\v_{s,\eta}(P)$ is contained in a fixed
polydisc relatively compact in $U'$ for all $\eta\in [s,t]$.
Then from \eqref{solve} and \eqref{solve2} we obtain that for
almost every $t\in [0,+\infty)$ and $q\in \v_{s,t}(P)$
\[
H(q,t)\equiv G(q,t).
\]
Since $z\mapsto \v_{s,t}(z)$ is  univalent on $P$ by Step 7',
 the set $\v_{s,t}(P)$ is open in $U'$ and  thus by the
identity principle for holomorphic maps $G(z,t)\equiv H(z,t)$
for all $z\in U'$ and almost every $t\in [0,+\infty)$.

Now  we have a way to define the vector field solving
\eqref{solve} on each relatively compact open set of any local
chart in $M$, and such a vector field is unique for almost
every time. Since $M$ is countable at infinity  by the very
definition of manifold, we can cover it with  countable many
coordinates charts in such a way that this covering is locally
finite. Hence the previous construction allows to define
globally $G(z,t)$ on $M$ for almost every $t\in [0,+\infty)$ in
such a way that \eqref{solve} is satisfied.

To end up the proof, we need to show that  $G(z,t)$ is a
Herglotz vector field. To this aim, it is only left to show
that $(dk_M)_{(z,w)}(G(z,t),G(w,t))\leq 0$ for a.e.
$t\in[0,+\infty)$, $z\neq w$. The map $M\ni
z\mapsto\v_{s,t}(z)$ is holomorphic, thus by Property (2) of
Definition \ref{evolution}
\begin{equation}\label{minoP}
\begin{split}
k_M(\v_{s,t+h}(z),\v_{s,t+h}(w))&\leq
k_M(\v_{t,t+h}(\v_{s,t}(z)),\v_{t,t+h}(\v_{s,t}(w)))\\&\leq
k_M(\v_{s,t}(z),\v_{s,t}(w))
\end{split}
\end{equation}
for all $h\geq 0$.

Let $Z(s)$ be the zero measure set such that
$[s,+\infty)\setminus Z(s)\ni t\mapsto \v_{s,t}(z)$ and
$[s,+\infty)\setminus Z(s)\ni t\mapsto \v_{s,t}(w)$ are
differentiable. Let $Z:=\cup_{s\in \mathbb Q} Z(s)$. Then $Z$
has zero  measure. Let $t_0\in [0,+\infty)\setminus Z$. By
\eqref{minoP} and \eqref{solve} (and taking into account that
$\v_{s,t}(z)\neq \v_{s,t}(w)$ as we will show in Proposition
\ref{univalent}), we have
\begin{equation*}
\begin{split}
 0&\geq \lim_{h\to
0}\frac{k_M(\v_{s,t_0+h}(z),\v_{s,t_0+h}(w))-k_M(\v_{s,t_0}(z),\v_{s,t_0}(w))}{h}
\\&=(dk_M)_{(\v_{s,t_0}(z),\v_{s,t_0}(w))}(G(\v_{s,t_0}(z),t_0),G(\v_{s,t_0}(w),t_0)),
\end{split}
\end{equation*}
which holds for every $s\in \mathbb Q$ such that $s\leq t_0$.
Taking the limit for $\mathbb Q\ni s\to t_0$ by
Lemma~\ref{dx-cont} we have the result.
\end{proof}

\begin{remark}
If $M=\D$ the unit disc in $\C$, in \cite{BCM2} we proved that
Proposition \ref{ev-to-her} holds also for evolution families
of order $d\geq 1$ (and not just $d=\infty$). This was done by
using the Berkson-Porta representation formula for
infinitesimal generators, which is missing in higher
dimensions.
\end{remark}

\begin{remark}
Dropping the hypothesis on the regularity of $k_M$ in
Proposition \ref{ev-to-her}, it follows from the above proof
that given an evolution family $(\v_{s,t})$ on $M$ then  there
exists a weak holomorphic vector field $G$ of order $\infty$
which verifies \eqref{solve}. It does not seem to be clear how
to obtain that $G$ is Herglotz, not even in the distributional
sense.
\end{remark}

\section{Univalence of evolution families}\label{Commenti}

\begin{proposition}\label{univalent}
Let $(\v_{s,t})$ be an evolution family on a taut manifold $M$.
Then for every $0\leq s\leq t<+\infty$ the map $M\ni z \mapsto
\v_{s,t}(z)\in M$ is univalent.
\end{proposition}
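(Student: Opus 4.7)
The plan is to argue by contradiction, reducing the problem to the local univalence already established in Step 7' of the proof of Proposition \ref{ev-to-her}. Suppose $\v_{s,t}(z)=\v_{s,t}(w)$ for some $z\neq w$ in $M$ and some $0\leq s<t$. Since $M$ is taut, $k_M$ is a genuine distance, in particular $k_M(z,w)>0$. Consider
\[
A:=\{\tau\in[s,t]:\v_{s,\tau}(z)=\v_{s,\tau}(w)\};
\]
this set is non-empty, closed in $[s,t]$ by Lemma \ref{dx-cont}, and does not contain $s$ because $\v_{s,s}=\id_M$. Hence $\tau_0:=\min A$ exists, $\tau_0>s$, $\v_{s,\tau}(z)\neq\v_{s,\tau}(w)$ for every $\tau\in[s,\tau_0)$, and $\v_{s,\tau_0}(z)=\v_{s,\tau_0}(w)=:p$.

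Next I would set up a local neighborhood of $p$ in which the cited local univalence applies. Pick a coordinate chart $U$ around $p$ and polydiscs $P'\subset\subset P\subset\subset U$ centered at $p$. Using $\v_{\tau_0,\tau_0}=\id_M$ and the joint continuity of Lemma \ref{dx-cont}, one produces $\epsilon>0$ such that $\v_{s,\tau}(z),\v_{s,\tau}(w)\in P'$ for every $\tau\in[\tau_0-\epsilon,\tau_0]$ and $\v_{\tau,\eta}(P')\subset P$ for all $\tau_0-\epsilon\leq\tau\leq\eta\leq\tau_0$. The second condition is verified by a standard compactness argument: otherwise one extracts sequences $\tau_n,\eta_n\to\tau_0$ and $q_n\to q\in\overline{P'}$ with $\v_{\tau_n,\eta_n}(q_n)\notin P$, contradicting the uniform convergence $\v_{\tau_n,\eta_n}\to\id_M$ on compact sets given by Lemma \ref{dx-cont}.

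Finally, I would apply Step 7' of the proof of Proposition \ref{ev-to-her} with parameters $(\tau_1,\tau_0)$ for any $\tau_1\in(\tau_0-\epsilon,\tau_0)$ and with polydisc $P'$: the restriction $\v_{\tau_1,\tau_0}:P'\to P$ is univalent. This is legitimate even though Proposition \ref{univalent} makes no $C^1$ assumption on $k_M$, since Steps 1'--6' produce a weak holomorphic vector field $G$ of order $\infty$ satisfying \eqref{solve} without using the regularity of $k_M$ (as noted in the remark following Proposition \ref{ev-to-her}), while the Gronwall argument of Step 7' uses only the Lipschitz bound on $G$ supplied by Lemma \ref{lipsch}. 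By the cocycle property $\v_{\tau_1,\tau_0}(\v_{s,\tau_1}(z))=\v_{s,\tau_0}(z)=p=\v_{s,\tau_0}(w)=\v_{\tau_1,\tau_0}(\v_{s,\tau_1}(w))$, while the points $\v_{s,\tau_1}(z),\v_{s,\tau_1}(w)\in P'$ are distinct by the minimality of $\tau_0$, contradicting univalence. The main obstacle is the careful disentangling of Step 7' from the $C^1$ hypothesis on $k_M$ used elsewhere in Proposition \ref{ev-to-her}; once that is understood, the reduction to a purely local contradiction at the first collision time $\tau_0$ is clean.
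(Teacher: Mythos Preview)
Your proof is correct and follows essentially the same approach as the paper's: argue by contradiction, isolate the first collision time $\tau_0$ (the paper's $t_0$), work in a local chart around the collision point, use Lemma~\ref{dx-cont} to place everything in nested polydiscs, and then invoke Step~7' of Proposition~\ref{ev-to-her} together with the cocycle property to derive a contradiction. Your write-up is in fact a bit more careful than the paper's on two points---the explicit compactness argument for $\v_{\tau,\eta}(P')\subset P$ and the observation that Steps~1'--7' do not use the $C^1$ regularity of $k_M$---but the underlying argument is the same.
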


\begin{proof}
 Fix $0\leq s\leq t<+\infty$. We show that
$\v_{s,t}$ is injective on $M$. By contradiction, assume
$\v_{s,t}(z_0)=\v_{s,t}(z_1)$ with $z_0\neq z_1$. Let $t_0\in
[s,t]$ be the smallest number such that
$\v_{s,t_0}(z_0)=\v_{s,t_0}(z_1)$. Since $\v_{s,s}=\id_M$,
Lemma \ref{dx-cont} implies  $t_0>s$. Let
$z_2=\v_{s,t_0}(z_0)$. Then for all $u\in (s,t_0)$
\begin{equation}\label{pp1}
\v_{u,t_0}(\v_{s,u}(z_0))=\v_{s,t_0}(z_0)=z_2=\v_{s,t_0}(z_1)=\v_{u,t_0}(\v_{s,u}(z_1)).
\end{equation}
Let $U$ be a local chart around $z_2$ and let $P$ be a
relatively compact polydisc in $U$. By Lemma \ref{dx-cont} we
can choose  $s<u<t_0$   such that both $\v_{s,u}(z_0)$ and
$\v_{s,u}(z_1)$ are contained in $P$ and $\v_{u,\eta}(P)$ is
contained in a relatively compact polydisc in $U$ for all
$\eta\in [u,t_0]$. Looking at the proof of Proposition
\ref{ev-to-her} (Steps 1' to 7') it follows that $\v_{u,t_0}$
is injective on $P$. But $\v_{s,u}(z_0)\neq \v_{s,u}(z_1)$  by
definition of $t_0$, hence we get a contradiction with
\eqref{pp1}.
\end{proof}

\bibliographystyle{alpha}

\end{document}